\newtheorem{theorem}{Theorem}[section]
\newtheorem{proposition}[theorem]{Proposition}
\newtheorem{lemma}[theorem]{Lemma}
\numberwithin{equation}{section}
\theoremstyle{remark}
\newtheorem{remark}[theorem]{Remark}
\newcommand{\Ric}{\mathop{\mathrm{Ric}}\nolimits}
\newcommand{\Ad}{\mathop{\mathrm{Ad}}\nolimits}
\newcommand{\ad}{\mathop{\mathrm{ad}}\nolimits}
\newcommand{\pr}{\mathrm{pr}}
\newcommand{\tr}{\mathop{\mathrm{tr}}\nolimits}
\author{Artem Pulemotov\thanks{School of Mathematics and Physics, The
University of Queensland, St Lucia,~QLD 4072, Australia}\ \thanks{The author is the recipient of an Australian Research Council Discovery Early-Career Researcher Award~DE150101548.} \\
\small{\texttt{a.pulemotov@uq.edu.au}}}
\title{Metrics with prescribed Ricci curvature on homogeneous spaces}
\begin{document}

\maketitle

\begin{abstract}
Let $G$ be a compact connected Lie group and $H$ a closed subgroup of $G$. Suppose the homogeneous space $G/H$ is effective and has dimension~3 or higher. Consider a $G$-invariant, symmetric, positive-semidefinite, nonzero (0,2)-tensor field $T$ on $G/H$. Assume that $H$ is a maximal connected Lie subgroup of $G$. We prove the existence of a $G$-invariant Riemannian metric $g$ and a positive number $c$ such that the Ricci curvature of $g$ coincides with $cT$ on $G/H$. Afterwards, we examine what happens when the maximality hypothesis fails to hold.
\\ \\
\textbf{Keywords:} Ricci curvature, prescribed curvature, homogeneous space
\end{abstract}

\section{Introduction and statement of the main result}

The primary objective of the present paper is to produce a global existence theorem for Riemannian metrics with prescribed Ricci curvature on a broad class of compact homogeneous spaces. After stating and proving this theorem, we examine what happens when its key assumption is violated. Let us briefly discuss the history of the subject and describe our results in greater detail. 

Suppose $M$ is a smooth manifold. Finding a Riemannian metric $g$ on $M$ whose Ricci curvature $\Ric(g)$ coincides with a prescribed symmetric (0,2)-tensor field $T$ is a fundamental problem in geometric analysis. DeTurck proved the local existence of $g$ in the paper~\cite{DDT81} assuming $T$ was nondegenerate on $M$; see also~\cite[Chapter~5]{AB87} and~\cite[Section~6.5]{JK06}. Jointly with Goldschmidt, he obtained an analogous result for $T$ of constant rank in~\cite{DDTHG99}. That result required analyticity and several other conditions on~$T$. 

Many mathematicians have investigated the global existence of Riemannian metrics with prescribed Ricci curvature. The papers~\cite{ED02,RPKT09,RPLAMP15,AP13b} provide a snapshot of the recent progress on this topic. We refer to~\cite[Chapter~5]{AB87} and~\cite[Section~9.2]{TA98} for surveys of older work and to~\cite{GC07} for a sample of the research done in the Lorentzian setting. Most global existence results proven to date deal with open manifolds. However, there are a number of notable exceptions. For instance, Hamilton offered a series of theorems regarding metrics with prescribed Ricci curvature on spheres in~\cite{RH84}. DeTurck and Delano\"e obtained more general versions of one of those theorems in~\cite{DDT85} and~\cite{PhD03}.\footnote{It is unclear from the literature whether Hamilton and DeTurck were aware of each other's work when preparing the papers~\cite{RH84,DDT85}.} Note that many of the global existence results referenced above share a common feature. Namely, their proofs rely on various forms of the implicit and inverse function theorems.

As far as impact and applications are concerned,
DeTurck's work on the paper~\cite{DDT81} led him to the discovery of the DeTurck trick for the Ricci flow. Rubinstein showed in~\cite{YR08} that, under natural hypotheses, a sequence of K\"ahler metrics $(g_i)_{i=1}^\infty$ such that $\Ric(g_{i+1})$ equals $g_i$ for all $i\in\mathbb N$ must converge to a K\"ahler-Einstein metric. He also established a link between $(g_i)_{i=1}^\infty$ and discretisation of geometric flows. Subsequently, he conjectured (personal communication, 30 April 2013) that similar results held for Riemannian $(g_i)_{i=1}^\infty$, at least in some special situations.

Suppose the manifold $M$ is closed. In this case, instead of trying to prove the existence of a metric $g$ with $\Ric(g)$ equal to~$T$, one should search for a metric $g$ and a positive number~$c$ such that
\begin{align}\label{PRC}
\Ric(g)=cT.
\end{align}
This paradigm was originally proposed by Hamilton in~\cite{RH84} and DeTurck in~\cite{DDT85}. To explain it, we consider the problem of finding a metric on the 2-dimensional sphere $\mathbb S^2$ with prescribed positive-definite Ricci curvature~$T_{\mathbb S^2}$. According to the Gauss-Bonnet theorem and~\cite[Theorem~2.1]{RH84} (see also~\cite[Corollary~2.2]{DDT82}), such a metric exists if and only if the volume of $\mathbb S^2$ with respect to $T_{\mathbb S^2}$ equals~$4\pi$. Consequently, it is always possible to find $g$ and $c$ such that~\eqref{PRC} holds on $\mathbb S^2$ with $T$ replaced by $T_{\mathbb S^2}$. The value of $c$ is uniquely determined by $T_{\mathbb S^2}$. 
Hamilton suggests in~\cite[Section~1]{RH84} that the purpose of $c$ is to compensate for the invariance of the Ricci curvature under scaling of the metric.

Consider a compact connected Lie group $G$ and a closed connected subgroup $H<G$ such that the homogeneous space $G/H$ has dimension $n\ge3$. Let the manifold $M$ coincide with~$G/H$. For simplicity, we assume $G$ acts effectively on $M$. The following theorem is the main result of the present paper. We prove it in Section~\ref{sec_main_result}.

\begin{theorem}\label{thm_main}
Suppose $H$ is a maximal connected Lie subgroup of $G$. Let $T$ be a symmetric $G$-invariant (0,2)-tensor field on $M$. Assume $T$ is positive-semidefinite but not identically zero on $M$. There exist a $G$-invariant Riemannian metric $g$ and a number $c>0$ such that equation~\eqref{PRC} holds true.
\end{theorem}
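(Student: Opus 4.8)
The plan is to set up a variational framework on the finite-dimensional space of $G$-invariant metrics and extract a critical point whose Ricci curvature is proportional to $T$.

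The plan is to treat the problem variationally on the finite-dimensional manifold $\mathcal M$ of $G$-invariant metrics. Fix a reductive decomposition $\mathfrak g=\mathfrak h\oplus\mathfrak m$ with $\mathfrak m$ an $\Ad(H)$-invariant complement; then each $g\in\mathcal M$ is an $\Ad(H)$-invariant inner product on $\mathfrak m$, and the scalar curvature becomes a smooth function $S\colon\mathcal M\to\mathbb R$. The first step is to record the first variation of $S$ along an admissible direction $h$, i.e.\ an $\Ad(H)$-invariant symmetric bilinear form. Starting from the standard formula for the variation of the total scalar curvature and using that on a homogeneous space both $S$ and $dV_g$ are constant, I would obtain the clean identity $dS_g(h)=-\langle\Ric(g),h\rangle_g$, where $\langle\cdot,\cdot\rangle_g$ denotes the inner product induced by $g$ on symmetric tensors.

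Next I would impose the single constraint $\Phi(g):=\tr_g T=1$, whose differential is $d\Phi_g(h)=-\langle T,h\rangle_g$. Because $T$ is nonzero and positive-semidefinite, $\Phi>0$ on $\mathcal M$ and $1$ is a regular value, so $\mathcal N:=\{\Phi=1\}$ is a smooth hypersurface transverse to the scaling direction $g\mapsto\lambda g$; this eliminates the scaling freedom of equation~\eqref{PRC}. A Lagrange multiplier computation then shows that any critical point of $S|_{\mathcal N}$ satisfies $\Ric(g)=c\,T$ for the multiplier $c$, since $\langle\cdot,\cdot\rangle_g$ is nondegenerate on invariant symmetric tensors. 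Taking the $g$-trace of this identity and using $\tr_g\Ric(g)=S(g)$ together with $\tr_g T=1$ yields the crucial relation $c=S(g)$. Hence it suffices to produce a critical point of $S|_{\mathcal N}$ at which the scalar curvature is positive, and the natural candidate is a global maximum.

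To see that the maximum value is positive I would exhibit one metric on $\mathcal N$ with $S>0$: rescale the normal metric $g_0$ associated with a bi-invariant metric on $G$ so that it meets the constraint. Since $H$ is maximal, $G/H$ cannot be a torus, so $g_0$, which has nonnegative curvature, is not flat and therefore $S(g_0)>0$; rescaling only multiplies $S$ by a positive factor. The main obstacle is attainment: $\mathcal N$ is noncompact, and a maximizing sequence $(g_k)$ could degenerate, with some eigenvalues of $g_k$ relative to a fixed background tending to $0$ or $\infty$. The constraint $\tr_g T=1$ with $T\ge0$ already prevents collapse in the directions on which $T$ is positive-definite, but the directions contained in the kernel of $T$ remain uncontrolled, which is the delicate point.

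This is exactly where the maximality of $H$ enters. Analyzing the homogeneous scalar-curvature formula, I expect to prove a dichotomy: a degeneration concentrated along a subspace $\mathfrak m'\subset\mathfrak m$ can keep $S$ near its supremum only when $\mathfrak h\oplus\mathfrak m'$ is a subalgebra of $\mathfrak g$, for otherwise the structure-constant (bracket) terms drive $S$ to $-\infty$. Since $H$ is a maximal connected subgroup, there is no intermediate subalgebra $\mathfrak h\subsetneq\mathfrak k\subsetneq\mathfrak g$, so no such degeneration can occur, and a maximizing sequence must subconverge to a genuine invariant metric $g\in\mathcal N$. That metric is the desired maximizer, and by the relation above $c=S(g)\ge S(g_0)>0$, which completes the argument. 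I anticipate that establishing the ``collapse forces a subalgebra'' dichotomy, and in particular handling the uncontrolled kernel directions of $T$, will be the technically hardest part of the proof.
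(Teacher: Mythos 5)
Your overall strategy is exactly the paper's: interpret solutions of~\eqref{PRC} as critical points of $S$ restricted to the hypersurface $\{\tr_gT=1\}$ via $dS_g(h)=-\langle\Ric(g),h\rangle$, and produce a critical point as a global maximum by ruling out degeneration using the maximality of $H$. Your route to $c>0$ (tracing $\Ric(g)=cT$ to get $c=S(g)$, then comparing with the normal metric, whose scalar curvature is strictly positive because some bracket term $\gamma_{ik}^l$ is nonzero) is a legitimate and arguably cleaner alternative to the paper's appeal to Bochner's theorem; both work.

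The genuine gap is the coercivity statement you defer to the end: the claim that any degeneration of a maximizing sequence drives $S\to-\infty$ unless it produces an intermediate subalgebra. This is the entire technical content of the proof (the paper's Lemmas~\ref{lem_gamma}--\ref{lem_est_scal_MT}), and as you phrase it the dichotomy is not yet a proof for two reasons. First, the difficulty is quantitative, not qualitative: writing $g$ with eigenvalues $x_1\le\cdots\le x_s$ on the irreducible summands, the scalar curvature formula~\eqref{sc_curv_formula} contains the positive term $\frac12\sum d_ib_i/x_i$, which itself diverges like $x_1^{-1}$ as $x_1\to0$; it is not enough that some bracket term is negative and unbounded, one must show it diverges \emph{strictly faster} than $x_1^{-1}$. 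The paper achieves this by applying the Wang--Ziller lemma to the nested sets $I=\{1\},\{1,2\},\ldots,\{1,\ldots,s-1\}$ to extract the chain $\sum_{i=2}^s x_i/x_{i-1}^2$, and then by an induction over $s$ converting that chain into the lower bounds $\alpha\,x_1^{-2^{s-1}/(2^{s-1}-1)}$ and $\alpha\,x_s^{1/(2^{s-1}-1)}$; the exponent exceeding $1$ is what beats the positive term. Second, your remark that the constraint controls the directions where $T$ is positive is only half the story: the constraint $\tr_gT=1$ yields precisely $x_1\le\tau_1$ and $x_s\ge\tau_2$ (an upper bound on the smallest eigenvalue and a lower bound on the largest), which are the \emph{hypotheses} of the coercivity estimate rather than any compactness by themselves; degenerations with $x_1\to0$ in the kernel of $T$, or $x_s\to\infty$ anywhere, are excluded only by the estimate above. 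Until that estimate (or an equivalent) is actually established, the existence of a maximizer — and hence the theorem — is not yet proved.
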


Let us make a few remarks. For a comprehensive discussion of concrete examples of homogeneous spaces satisfying the hypotheses of Theorem~\ref{thm_main}, see~\cite[pages~185--186]{MWWZ86}. As far as conditions on $T$ are concerned, the majority of global existence results for metrics with prescribed Ricci curvature, including the results in~\cite{RH84,DDT85,PhD03}, require that the prescribed tensor field be positive- or negative-definite. Sometimes, this requirement is implicit. For instance, it is automatically satisfied if the prescribed tensor field is assumed to be ``close" to a non-Ricci-flat Einstein metric. Theorem~\ref{thm_main}, on the other hand, applies when $T$ is positive-semidefinite and nonzero. Our arguments break down when $T$ has mixed signature. Specifically, one cannot take advantage of Lemma~\ref{lem_est_scal_MT} in this case; see Remark~\ref{rem_T_pos} for details. Note that, according to Bochner's theorem, there are no $G$-invariant metrics on~$M$ with negative-semidefinite Ricci curvature.

Section~\ref{sec_H_not_max} explores the case where the isotropy representation of $M$ splits into two inequivalent irreducible summands and the maximality assumption on $H$ is violated. Our main result in this section, Proposition~\ref{prop_2sum}, provides a necessary and sufficient condition for the existence of a $G$-invariant metric $g$ and a positive number $c$ satisfying~\eqref{PRC}. When this condition holds, the pair $(g,c)$ is unique up to scaling of~$g$. Proposition~\ref{prop_2sum} implies that it is always possible to choose the tensor field~$T$ in such a way that $cT$ cannot be the Ricci curvature of any $G$-invariant metric for any number~$c$. Homogeneous spaces whose isotropy representations split into two irreducible summands were carefully studied by Dickinson and Kerr in the paper~\cite{WDMK08} and by He in the paper~\cite{CH12}; see also Buzano's work~\cite{MB14}.

\section{Proof of the main result}\label{sec_main_result}

The method we use to prove Theorem~\ref{thm_main} may be thought of as the method of Lagrange multipliers. Our reasoning stands on two pillars. The first one is the interpretation of metrics satisfying~\eqref{PRC} for some $c\in\mathbb R$ as critical points of a functional subject to a constraint. This interpretation, given by Lemma~\ref{lem_var} below, differs from the variational principle proposed by Hamilton in~\cite{RH84}. The second pillar is the technique invented by Wang and Ziller in~\cite{MWWZ86} to prove the existence of Einstein metrics on homogeneous spaces obeying the hypotheses of Theorem~\ref{thm_main}. This technique was further developed in~\cite{CB04,CBMWWZ04}. To make it work for our purposes will require a new estimate on the scalar curvature of a $G$-invariant metric on $M$. This estimate is given by Lemma~\ref{lem_est_scal_MT} below.

We begin with a few preparatory remarks. Let $\mathcal M$ be the space of $G$-invariant Riemannian metrics on~$M$. This space carries a natural smooth manifold structure; see, e.g.,~\cite[pages~6318--6319]{YNERVS07}. The scalar curvature $S(g)$ of a metric $g\in\mathcal M$ is constant on $M$. 
Therefore, we may interpret $S(g)$ as the result of applying a functional $S:\mathcal M\to\mathbb R$ to $g\in\mathcal M$. Note that $S$ is differentiable on $\mathcal M$; see, e.g.,~\cite[Corollary~7.39]{AB87}.

If the dimension of the space of $G$-invariant symmetric (0,2)-tensor fields on $M$ is equal to~1, then the assertion of Theorem~\ref{thm_main} is easy to prove. In this case, any metric $g\in\mathcal M$ satisfies formula~\eqref{PRC} for some $c\in\mathbb R$. Using Bochner's theorem (see~\cite[Theorem~1.84]{AB87}), one concludes that $c$ must be positive.
In the remainder of Section~\ref{sec_main_result}, we assume the dimension of the space of $G$-invariant symmetric (0,2)-tensor fields on $M$ is~2 or higher. Let $T$ be such a tensor field. Suppose $T$ is positive-semidefinite but not identically zero. Denote by $\tr_gT$ the trace of $T$ with respect to $g\in\mathcal M$. We write $\mathcal M_T$ for the space of all $g\in\mathcal M$ such that $\tr_gT=1$. The smooth manifold structure on $\mathcal M$ induces a smooth manifold structure on $\mathcal M_T$. In fact, $\mathcal M_T$ is a hypersurface in~$\mathcal M$. Denote by $S|_{\mathcal M_T}$ the restriction of the functional $S$ to~$\mathcal M_T$. As we will demonstrate below, $S|_{\mathcal M_T}$ attains its largest value when the conditions of Theorem~\ref{thm_main} are satisfied. Further variational properties of $S|_{\mathcal M_T}$ are discussed in Remark~\ref{rem_2sum_var} below.

\begin{lemma}\label{lem_var}
A Riemannian metric $g\in\mathcal M_T$ satisfies equation~\eqref{PRC} for some $c\in\mathbb R$ if and only if it is a critical point of $S|_{\mathcal M_T}$.
\end{lemma}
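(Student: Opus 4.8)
The plan is to treat the constrained critical-point condition via Lagrange multipliers, so that the entire argument reduces to computing two first-variation formulas and then invoking the non-degeneracy of a natural pairing. Recall that $\mathcal M$ is a finite-dimensional manifold whose tangent space at any $g$ may be identified with the vector space $\mathcal T$ of all $G$-invariant symmetric $(0,2)$-tensor fields on $M$, an identification independent of $g$. For $a,b\in\mathcal T$ write $\langle a,b\rangle_g$ for the (constant) fibrewise inner product induced by $g$; note that $\langle\cdot,\cdot\rangle_g$ is positive-definite on $\mathcal T$, being the restriction of a positive-definite inner product, and that both $\Ric(g)$ and $T$ lie in $\mathcal T$.

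First I would record the derivative of the constraint. Writing $F(g)=\tr_gT$ and varying $g$ in a direction $h\in\mathcal T$, the identity $\frac{d}{dt}g^{ij}=-g^{ik}g^{jl}h_{kl}$ gives
\[
DF_g(h)=-\langle T,h\rangle_g.
\]
Taking $h=g$ yields $DF_g(g)=-\tr_gT=-1\neq0$, so $DF_g$ is a nonzero functional; hence $\mathcal M_T=F^{-1}(1)$ is a genuine hypersurface and the Lagrange multiplier theorem applies. This is the step where the hypothesis $T\not\equiv0$ enters.

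Next I would compute $DS_g$, starting from the classical first variation of the total scalar curvature $\mathcal S(g)=\int_M S(g)\,dV_g$ (see, e.g., \cite{AB87}), namely
\[
D\mathcal S_g(h)=\int_M\big\langle{-\Ric(g)}+\tfrac12 S(g)\,g,\,h\big\rangle_g\,dV_g.
\]
On $M$ every invariant quantity is constant, so $\mathcal S(g)=S(g)\,V(g)$ with $V(g)=\int_M dV_g$, and the integrand above is constant as well. Differentiating the product $\mathcal S=S\cdot V$ and using $DV_g(h)=\tfrac12(\tr_gh)V(g)$ together with $\langle g,h\rangle_g=\tr_gh$, the two terms carrying $\tfrac12 S(g)\tr_gh$ cancel, leaving the clean identity
\[
DS_g(h)=-\langle\Ric(g),h\rangle_g,\qquad h\in\mathcal T.
\]
I expect this cancellation to be the one point requiring care, as it is precisely what makes the scalar-curvature functional (rather than the total scalar curvature) the correct object to vary on $\mathcal M$.

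Finally I would assemble the two formulas. The metric $g\in\mathcal M_T$ is a critical point of $S|_{\mathcal M_T}$ exactly when $DS_g$ annihilates $\ker DF_g=T_g\mathcal M_T$; since $DF_g\neq0$, Lagrange's theorem renders this equivalent to $DS_g=\lambda\,DF_g$ for some $\lambda\in\mathbb R$, that is,
\[
\langle\Ric(g),h\rangle_g=\lambda\,\langle T,h\rangle_g\qquad\text{for all }h\in\mathcal T.
\]
Because $\langle\cdot,\cdot\rangle_g$ is non-degenerate on $\mathcal T$ and $\Ric(g),T\in\mathcal T$, this holds if and only if $\Ric(g)=\lambda T$. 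Setting $c=\lambda$ establishes equation~\eqref{PRC}, and reversing the implications proves the converse. The only genuine obstacle is the first-variation computation above; the remainder is linear algebra on the finite-dimensional space $\mathcal T$.
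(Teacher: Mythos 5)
Your proof is correct and follows essentially the same route as the paper: both derive $dS_g(h)=-\langle\Ric(g),h\rangle$ from the first variation of the Einstein--Hilbert functional together with the volume variation, and then reduce the critical-point condition to the linear-algebra statement that $\Ric(g)$ is proportional to $T$. Your explicit verification that $DF_g\neq0$ (so that $\mathcal M_T$ is a hypersurface and the Lagrange multiplier theorem applies) is a point the paper only asserts, but it is not a genuine difference in approach.
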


\begin{proof}
Fix a $G$-invariant symmetric $(0,2)$-tensor field $h$ on $M$. Let us compute the derivative $dS_g(h)$ for $g\in\mathcal M$. In order to do so, we consider the Einstein-Hilbert functional $E$ on the space $\mathcal M$. By definition,
\begin{align*}
E(g)=\int_MS(g)\,d\mu=S(g)\mu(M),\qquad g\in\mathcal M,
\end{align*}
where $\mu$ is the Riemannian volume measure corresponding to $g$. Consequently, the equality
\begin{align*}
dS_g(h)&=\frac1{\mu(M)}dE_g(h)-\frac{d(\mu(M))_g(h)}{\mu(M)^2}E(g),\qquad g\in\mathcal M,
\end{align*}
holds true. 
The well-known first variation formula for $E$ (see, e.g.,~\cite[Proposition~4.17]{AB87} or~\cite[Section~2.4]{BCPLLN06}) yields
\begin{align*}
dE_g(h)&=\int_M\left\langle\frac{S(g)}2g-\Ric(g),h\right\rangle\,d\mu=\left\langle\frac{S(g)}2g-\Ric(g),h\right\rangle\mu(M).
\end{align*}
The angular brackets here denote the scalar product in the tensor bundle over $M$ induced by $g$. It is easy to see that
\begin{align*}
d(\mu(M))_g(h)=\frac12\langle g,h\rangle\mu(M);
\end{align*}
cf.~\cite[Section~2.4]{BCPLLN06}. Thus, we have
\begin{align*}
dS_g(h)&=\left\langle\frac{S(g)}2g-\Ric(g),h\right\rangle-\frac{S(g)}2\langle g,h\rangle=-\langle\Ric(g),h\rangle,\qquad g\in\mathcal M.
\end{align*}
The space tangent to~$\mathcal M_T$ at any point consists of $G$-invariant symmetric (0,2)-tensor fields $h$ such that $\langle T,h\rangle=0$. Together with the above equality for $dS_g(h)$, this observation implies the assertion of the lemma.
\end{proof}

Our next objective is to state a formula for the scalar curvature of a $G$-invariant metric on $M$. We will use this formula to prove the existence of a critical point of $S|_{\mathcal M_T}$. First, we need to introduce more notation. Namely, let $\mathfrak g$ and $\mathfrak h$ be the Lie algebras of $G$ and $H$. Choose an
$\Ad(G)$-invariant scalar product $Q$ on $\mathfrak g$. Suppose $\mathfrak m$ is the $Q$-orthogonal complement of $\mathfrak h$ in~$\mathfrak g$. We standardly identify $\mathfrak m$ with the tangent space
of $M$ at~$H$. Consider a $Q$-orthogonal $\Ad(H)$-invariant decomposition
\begin{align}\label{m_decomp}
\mathfrak m=\mathfrak m_1\oplus\cdots\oplus\mathfrak m_s
\end{align}
such that $\Ad(H)|_{\mathfrak m_i}$ is irreducible for each $i=1,\ldots,s$. It is unique up to the order of summands if $\Ad(H)|_{\mathfrak m_i}$ is inequivalent to $\Ad(H)|_{\mathfrak m_k}$ whenever $i\ne k$. In the beginning of Section~\ref{sec_main_result}, we assumed the dimension of the space of $G$-invariant symmetric (0,2)-tensor fields on $M$ was~2 or higher. Therefore, $s$ must be greater than or equal to~2.

Our formula for the scalar curvature of a $G$-invariant metric will involve arrays of numbers, $(b_i)_{i=1}^s$ and $(\gamma_{ik}^l)_{i,k,l=1}^s$, associated with the scalar product $Q$ and the decomposition~\eqref{m_decomp}. To introduce the first one, suppose $B$ is the Killing form on the Lie algebra~$\mathfrak g$. For every $i=1,\ldots,s$, because $\Ad(H)|_{\mathfrak m_i}$ is irreducible, there exists a nonnegative $b_i$ such that 
\begin{align}\label{b_def}
B|_{\mathfrak m_i} = -b_iQ|_{\mathfrak m_i}.
\end{align}
To introduce the second array, fix a $Q$-orthonormal basis $(e_j)_{j=1}^n$ of $\mathfrak m$ adapted to the decomposition~\eqref{m_decomp}. Given $i,k,l=1,\dots,s$, define 
\begin{align}\label{gamma_def}
\gamma_{ik}^l=\sum Q([e_{\iota_i},e_{\iota_k}], e_{\iota_l})^2.
\end{align}
The sum is taken over all $\iota_i, \iota_k$ and $\iota_l$ such that $e_{\iota_i} \in \mathfrak m_i$, $e_{\iota_k} \in \mathfrak m_k$ and $e_{\iota_l} \in \mathfrak m_l$. Note that $\gamma_{ik}^l$ is independent of the choice of $(e_j)_{j=1}^n$ and symmetric in all three indices. Our further arguments will require the following property of the array $(\gamma_{ik}^l)_{i,k,l=1}^s$ due to Wang and Ziller (see the first paragraph in~\cite[Proof of Theorem~(2.2)]{MWWZ86}).

\begin{lemma}\label{lem_gamma}
Suppose $H$ is a maximal connected Lie subgroup of $G$. There exists a constant $a>0$ depending only on $G$, $H$ and $Q$ such that the following statement holds: for each non-empty proper subset $I\subset\{1,\ldots,s\}$, it is possible to find $i,k\in I$ and $l\notin I$ with $\gamma_{ik}^l\ge a$.
\end{lemma}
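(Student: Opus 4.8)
The plan is to argue by contradiction, exploiting the maximality of $\mathfrak h$ to rule out the simultaneous vanishing of all the relevant coefficients, and then to extract the uniform constant $a$ from a finiteness argument. The starting observation is a reinterpretation of the $\gamma_{ik}^l$. Because $\gamma_{ik}^l$ is a sum of squares of the numbers $Q([e_{\iota_i},e_{\iota_k}],e_{\iota_l})$ taken over $Q$-orthonormal bases of $\mathfrak m_i$, $\mathfrak m_k$, $\mathfrak m_l$, the equality $\gamma_{ik}^l=0$ holds if and only if $[\mathfrak m_i,\mathfrak m_k]$ is $Q$-orthogonal to $\mathfrak m_l$, that is, $\pr_{\mathfrak m_l}[\mathfrak m_i,\mathfrak m_k]=0$.

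Fix a non-empty proper subset $I\subset\{1,\ldots,s\}$ and set $\mathfrak m_I=\bigoplus_{i\in I}\mathfrak m_i$. Suppose, towards a contradiction, that $\gamma_{ik}^l=0$ for every $i,k\in I$ and every $l\notin I$. By the observation above, $[\mathfrak m_I,\mathfrak m_I]$ is then $Q$-orthogonal to $\mathfrak m_l$ for all $l\notin I$. Since the summands $\mathfrak m_1,\ldots,\mathfrak m_s$ are mutually $Q$-orthogonal and $\mathfrak h\perp\mathfrak m$, the $Q$-orthogonal complement of $\bigoplus_{l\notin I}\mathfrak m_l$ in $\mathfrak g$ is precisely $\mathfrak h\oplus\mathfrak m_I$; hence $[\mathfrak m_I,\mathfrak m_I]\subseteq\mathfrak h\oplus\mathfrak m_I$. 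I would then verify that $\mathfrak h\oplus\mathfrak m_I$ is a Lie subalgebra of $\mathfrak g$: one has $[\mathfrak h,\mathfrak h]\subseteq\mathfrak h$, the $\Ad(H)$-invariance of each $\mathfrak m_i$ gives $[\mathfrak h,\mathfrak m_I]\subseteq\mathfrak m_I$, and $[\mathfrak m_I,\mathfrak m_I]$ lands in $\mathfrak h\oplus\mathfrak m_I$ by the previous line. Because $I$ is non-empty and proper, this subalgebra strictly contains $\mathfrak h$ and is strictly contained in $\mathfrak g$.

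This is where the hypothesis enters, and it is the crux of the argument: since $H$ is a maximal connected Lie subgroup of $G$, its Lie algebra $\mathfrak h$ is a maximal proper subalgebra of $\mathfrak g$, so no subalgebra can sit strictly between $\mathfrak h$ and $\mathfrak g$. The existence of $\mathfrak h\oplus\mathfrak m_I$ therefore yields a contradiction, and we conclude that for each non-empty proper $I$ there is at least one triple with $i,k\in I$, $l\notin I$, and $\gamma_{ik}^l>0$.

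It remains to produce a single constant $a$ valid for all $I$ simultaneously, which is routine. There are only finitely many non-empty proper subsets $I$ of $\{1,\ldots,s\}$, and for each the quantity $\max\{\gamma_{ik}^l:i,k\in I,\ l\notin I\}$ is strictly positive by the preceding paragraph (the index set is non-empty precisely because $I$ is non-empty and proper). Setting $a$ equal to the minimum of these finitely many positive maxima gives the required bound. Since the decomposition~\eqref{m_decomp} and the numbers $\gamma_{ik}^l$ depend only on $G$, $H$, and $Q$, so does $a$. The one point I would check with care is the passage from maximality of the subgroup $H$ to maximality of the subalgebra $\mathfrak h$, which rests on the standard correspondence between connected Lie subgroups and Lie subalgebras; the remainder is elementary linear algebra together with the bracket relations.
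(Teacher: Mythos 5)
Your central argument is correct and is exactly the mechanism behind this lemma, which the paper does not prove itself but attributes to the first paragraph of the proof of Theorem~(2.2) in \cite{MWWZ86}: if $\gamma_{ik}^l=0$ for all $i,k\in I$ and $l\notin I$, then $[\mathfrak m_I,\mathfrak m_I]\subseteq\mathfrak h\oplus\mathfrak m_I$, so $\mathfrak h\oplus\mathfrak m_I$ is a subalgebra sitting strictly between $\mathfrak h$ and $\mathfrak g$, contradicting the maximality of $H$. The bracket computations, the identification of the orthogonal complement, and the passage from maximal connected subgroups to maximal subalgebras are all fine.

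The gap is in the final step, where you extract the uniform constant. You assert that ``the decomposition~\eqref{m_decomp} and the numbers $\gamma_{ik}^l$ depend only on $G$, $H$, and $Q$,'' but this is false in general: as the paper notes immediately after~\eqref{m_decomp}, the decomposition is unique (up to reordering) only when the summands $\Ad(H)|_{\mathfrak m_i}$ are pairwise inequivalent. When equivalent summands occur there is a continuum of admissible decompositions and the $\gamma_{ik}^l$ genuinely depend on which one is chosen. This matters because the proof of Theorem~\ref{thm_main} applies the lemma after ``modifying the decomposition~\eqref{m_decomp} if necessary'' so as to diagonalize each metric $g$ as in~\eqref{g_form_x}; the lemma must therefore supply a single $a>0$ valid for \emph{every} admissible decomposition simultaneously, which is precisely what the clause ``depending only on $G$, $H$ and $Q$'' encodes. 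Your minimum over the finitely many subsets $I$ handles only one fixed decomposition. To close the gap one argues, as Wang and Ziller do, that the set of $Q$-orthogonal $\Ad(H)$-invariant decompositions of $\mathfrak m$ into $s$ irreducible summands is a compact subset of a product of Grassmannians, that each $\gamma_{ik}^l$ is a continuous function on this set, and hence that the function assigning to a decomposition the quantity $\min_I\max\{\gamma_{ik}^l:\ i,k\in I,\ l\notin I\}$ is continuous and everywhere positive (by your maximality argument), so it attains a positive minimum; that minimum is the required $a$.
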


Recall that our objective is to state a formula for the scalar curvature of a $G$-invariant metric on $M$. Let $g$ lie in $\mathcal M$. Modifying the decomposition~\eqref{m_decomp} if necessary, we can write $g$ as the sum
\begin{align}\label{g_form_x}
g(X,Y)=\sum_{i=1}^sx_iQ(\pr_{\mathfrak m_i}X,\pr_{\mathfrak m_i}Y),\qquad X,Y\in\mathfrak m,
\end{align}
for some $x_i>0$; see~\cite[page~180]{MWWZ86}. The notation $\pr_{\mathfrak m_i}$ here means projection onto $\mathfrak m_i$. The scalar curvature of $g$ satisfies
\begin{align}\label{sc_curv_formula}
S(g)=\frac12\sum_{i=1}^s\frac{d_ib_i}{x_i}-\frac14\sum_{i,k,l=1}^s\gamma_{ik}^l\frac{x_l}{x_ix_k}
\end{align}
with $d_i$ the dimension of $\mathfrak m_i$. The reader will find the derivation of this formula in, e.g.,~\cite{MWWZ86} and~\cite[Chapter~7]{AB87}. Throughout Section~\ref{sec_main_result}, we assume $x_1\le\cdots\le x_s$ without loss of generality. The next two lemmas provide estimates on $S(g)$. The proof of the first one relies on~\eqref{sc_curv_formula}.

\begin{lemma}\label{lem_est_S}
Suppose $H$ is a maximal connected Lie subgroup of $G$. Then the formula
\begin{align*}
S(g)
\le\frac12\sum_{i=1}^s\frac{d_ib_i}{x_i}-\frac a{4(s-1)}\sum_{i=2}^s\frac{x_i}{x_{i-1}^2}
\end{align*}
holds true.
\end{lemma}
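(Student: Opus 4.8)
The plan is to leave the first sum in~\eqref{sc_curv_formula} untouched, since it already appears verbatim in the desired bound, and to produce a lower estimate for the second sum $\Sigma:=\sum_{i,k,l=1}^s\gamma_{ik}^l\frac{x_l}{x_ix_k}$. Because every summand of $\Sigma$ is nonnegative, it suffices to extract, for each index $j\in\{2,\ldots,s\}$, a single term of $\Sigma$ that dominates a constant multiple of $\frac{x_j}{x_{j-1}^2}$, and then to assemble these $s-1$ contributions.

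To locate such a term, I would apply Lemma~\ref{lem_gamma} to the non-empty proper subset $I_j=\{1,\ldots,j-1\}$. This yields indices $i,k\in I_j$ and $l\notin I_j$ with $\gamma_{ik}^l\ge a$; in particular $i,k\le j-1<j\le l$. Invoking the standing ordering $x_1\le\cdots\le x_s$, one gets $x_ix_k\le x_{j-1}^2$ and $x_l\ge x_j$, so the corresponding summand obeys $\gamma_{ik}^l\frac{x_l}{x_ix_k}\ge a\frac{x_j}{x_{j-1}^2}$.

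The subtle point, and the step I expect to be the main obstacle, is that the terms selected for distinct values of $j$ need not be distinct, so one cannot simply add the $s-1$ lower bounds and conclude $\Sigma\ge a\sum_{j=2}^s\frac{x_j}{x_{j-1}^2}$. To control this overcounting, I would bound how many values of $j$ can select a fixed ordered triple $(i,k,l)$ (with $l$ playing the role of the index outside $I_j$). Such a triple is chosen precisely when $\max(i,k)+1\le j\le l$, hence for at most $l-\max(i,k)\le s-1$ values of $j$. Summing the per-$j$ estimates and recognising that each genuine summand of $\Sigma$ is counted at most $s-1$ times then gives $a\sum_{j=2}^s\frac{x_j}{x_{j-1}^2}\le(s-1)\Sigma$. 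Dividing by $s-1$, multiplying by $\tfrac14$, and substituting into~\eqref{sc_curv_formula} yields exactly the claimed inequality. Apart from this multiplicity bookkeeping, the argument is a direct term-by-term comparison, so I anticipate no further technical difficulty.
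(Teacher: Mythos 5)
Your argument is correct and follows essentially the same route as the paper: both apply Lemma~\ref{lem_gamma} to the initial segments $\{1,\ldots,j-1\}$ for $j=2,\ldots,s$, use the ordering $x_1\le\cdots\le x_s$ to bound the selected summand below by $a\,x_j/x_{j-1}^2$, and compensate for the fact that the same triple may be selected repeatedly by the factor $\tfrac1{s-1}$. The only difference is presentational -- the paper carries out the multiplicity bookkeeping as an iterative splitting of each selected term into $s-1$ equal shares, one spent at each step, whereas you do a single global count of how many $j$ can pick a fixed ordered triple; the two devices are equivalent.
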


\begin{proof}
Let us choose $I=\{1\}$ in Lemma~\ref{lem_gamma}. We conclude that $\gamma_{i_1k_1}^{l_1}\ge a$ for $i_1,k_1$ equal to~1 and some $l_1$ between~2 and~$s$. Because $x_1\le\cdots\le x_s$, the estimate
\begin{align*}
S(g)&=\frac12\sum_{i=1}^s\frac{d_ib_i}{x_i}-\frac1{4(s-1)}\gamma_{i_1k_1}^{l_1}\frac{x_{l_1}}{x_{i_1}x_{k_1}}-\frac{s-2}{4(s-1)}\gamma_{i_1k_1}^{l_1}\frac{x_{l_1}}{x_{i_1}x_{k_1}}-\frac14\sum\gamma_{ik}^l\frac{x_l}{x_ix_k}
\\
&\le\frac12\sum_{i=1}^s\frac{d_ib_i}{x_i}-\frac a{4(s-1)}\frac{x_2}{x_1^2}-\frac{a(s-2)}{4(s-1)}\frac{x_{l_1}}{x_{i_1}x_{k_1}}-\frac14\sum\gamma_{ik}^l\frac{x_l}{x_ix_k}
\end{align*}
holds true. The sums without bounds here are taken over all the indices $i,k,l=1,\ldots,s$ with $(i,k,l)\ne(i_1,k_1,l_1)$. 

Choosing $I=\{1,2\}$ in Lemma~\ref{lem_gamma} yields $\gamma_{i_2k_2}^{l_2}\ge a$ for some $i_2,k_2$ equal to 1 or 2 and some $l_2$ between $3$ and $s$. If $(i_2,k_2,l_2)$ coincides with $(i_1,k_1,l_1)$, then
\begin{align*}
S(g)&
\le\frac12\sum_{i=1}^s\frac{d_ib_i}{x_i}-\frac a{4(s-1)}\frac{x_2}{x_1^2}
-\frac a{4(s-1)}\frac{x_{l_2}}{x_{i_2}x_{k_2}}-\frac{a(s-3)}{4(s-1)}\frac{x_{l_1}}{x_{i_1}x_{k_1}}-\frac14\sum\gamma_{ik}^l\frac{x_l}{x_ix_k}
\\ &\le
\frac12\sum_{i=1}^s\frac{d_ib_i}{x_i}-\frac a{4(s-1)}\left(\frac{x_2}{x_1^2}
+\frac{x_3}{x_2^2}\right)-\frac{a(s-3)}{4(s-1)}\frac{x_{l_1}}{x_{i_1}x_{k_1}}-\frac14\sum\gamma_{ik}^l\frac{x_l}{x_ix_k}.
\end{align*}
As before, the sums without bounds are taken over $i,k,l=1,\ldots,s$ with $(i,k,l)\ne(i_1,k_1,l_1)$. If $(i_2,k_2,l_2)$ differs from $(i_1,k_1,l_1)$, then
\begin{align*}
S(g)
&\le\frac12\sum_{i=1}^s\frac{d_ib_i}{x_i}-\frac a{4(s-1)}\frac{x_2}{x_1^2}
-\frac1{4(s-1)}\gamma_{i_2k_2}^{l_2}\frac{x_{l_2}}{x_{i_2}x_{k_2}}
\\ 
&\hphantom{=}~-\frac{s-2}{4(s-1)}\gamma_{i_2k_2}^{l_2}\frac{x_{l_2}}{x_{i_2}x_{k_2}} 
-\frac{a(s-2)}{4(s-1)}\frac{x_{l_1}}{x_{i_1}x_{k_1}}-\frac14\sum\gamma_{ik}^l\frac{x_l}{x_ix_k}
\\ &\le
\frac12\sum_{i=1}^s\frac{d_ib_i}{x_i}-\frac a{4(s-1)}\left(\frac{x_2}{x_1^2}
+\frac{x_3}{x_2^2}\right) \\
&\hphantom{=}~-\frac{a(s-2)}{4(s-1)}\left(\frac{x_{l_2}}{x_{i_2}x_{k_2}}+\frac{x_{l_1}}{x_{i_1}x_{k_1}}\right)-\frac14\sum\gamma_{ik}^l\frac{x_l}{x_ix_k}.
\end{align*}
Now the sums without bounds are over $i,k,l=1,\ldots,s$ with $(i,k,l)\ne(i_1,k_1,l_1)$ and $(i,k,l)\ne(i_2,k_2,l_2)$.

Consecutively choosing $I=\{1,2,\ldots,m\}$ for $m=1,2,\ldots,s-1$ in Lemma~\ref{lem_gamma} and arguing as above, we conclude that
\begin{align*}
S(g)\le
\frac12\sum_{i=1}^s\frac{d_ib_i}{x_i}-\frac a{4(s-1)}\sum_{i=2}^s\frac{x_i}{x_{i-1}^2}-\cdots,
\end{align*}
where the dots represent some nonnegative quantity. The required estimate on $S(g)$ follows immediately.
\end{proof}

Define the number $b>0$ by setting
\begin{align*}
b=-\inf B(X,X)+1,
\end{align*}
where the infimum is taken over the set of all $X\in\mathfrak m$ with $Q(X,X)=1$. It is clear that $b_i<b$ for all $i=1,\ldots,s$.

\begin{lemma}\label{lem_est_scal_MT}
Suppose $H$ is a maximal connected Lie subgroup of $G$. If $x_1\le\tau_1$ and $x_s\ge\tau_2$ for some positive numbers $\tau_1$ and $\tau_2$, then the estimate
\begin{align*}
S(g)\le\frac{bn}2x_1^{-1}-\alpha\bigg(x_1^{-\frac{2^{s-1}}{2^{s-1}-1}}+x_s^{\frac1{2^{s-1}-1}}\bigg)
\end{align*}
holds with the constant $\alpha>0$ depending only on $G$, $H$, $Q$, $\tau_1$ and $\tau_2$.
\end{lemma}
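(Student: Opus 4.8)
The plan is to start from the estimate proven in Lemma~\ref{lem_est_S} and control its two terms separately, since the first term there is already of the right shape and only the second (negative) term needs to be bounded from below. For the positive term, observe that $b_i<b$ for every $i$ and that the ordering $x_1\le x_2\le\cdots\le x_s$ gives $x_i^{-1}\le x_1^{-1}$; hence
\begin{align*}
\frac12\sum_{i=1}^s\frac{d_ib_i}{x_i}\le\frac{b}{2x_1}\sum_{i=1}^sd_i=\frac{bn}2x_1^{-1},
\end{align*}
which is exactly the leading term in the desired inequality. It therefore remains to bound the sum $\sum_{i=2}^s\frac{x_i}{x_{i-1}^2}$ from below by a positive multiple of $x_1^{-\frac{2^{s-1}}{2^{s-1}-1}}+x_s^{\frac1{2^{s-1}-1}}$, and this is where the hypotheses $x_1\le\tau_1$ and $x_s\ge\tau_2$ will be used.

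The heart of the argument is a weighted arithmetic--geometric mean inequality applied to the quantities $a_i=\frac{x_i}{x_{i-1}^2}$ for $i=2,\ldots,s$, with the geometrically decaying weights $c_i=2^{s-i}$. These weights are chosen precisely so that the intermediate variables cancel: a direct computation gives $\prod_{i=2}^s a_i^{c_i}=x_1^{-2^{s-1}}x_s$, because the exponent of each $x_j$ with $2\le j\le s-1$ equals $c_j-2c_{j+1}=0$, while $x_1$ and $x_s$ survive with exponents $-2c_2=-2^{s-1}$ and $c_s=1$. Writing $W=\sum_{i=2}^sc_i=2^{s-1}-1$ and normalising, the weighted AM--GM inequality then yields
\begin{align*}
\sum_{i=2}^s\frac{x_i}{x_{i-1}^2}\ge\sum_{i=2}^s\frac{c_i}{W}\,a_i\ge\Big(\prod_{i=2}^sa_i^{c_i}\Big)^{1/W}=x_1^{-\frac{2^{s-1}}{2^{s-1}-1}}x_s^{\frac1{2^{s-1}-1}},
\end{align*}
where the first inequality holds because each normalised weight $c_i/W$ lies in $(0,1]$ and every $a_i$ is positive. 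I expect the identification of these doubling weights, together with the verification that they collapse the product to a clean monomial in $x_1$ and $x_s$ alone, to be the main obstacle; the precise exponents $\frac{2^{s-1}}{2^{s-1}-1}$ and $\frac1{2^{s-1}-1}$ in the statement are forced by this choice and by the value of $W$.

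Finally, I would convert this product bound into the required sum bound using the two hypotheses. Writing $p=\frac{2^{s-1}}{2^{s-1}-1}$ and $q=\frac1{2^{s-1}-1}$, the condition $x_s\ge\tau_2$ gives $x_1^{-p}x_s^q\ge\tau_2^q\,x_1^{-p}$, while $x_1\le\tau_1$ gives $x_1^{-p}x_s^q\ge\tau_1^{-p}\,x_s^q$. Adding these two consequences of the displayed estimate and bounding below by the smaller coefficient produces
\begin{align*}
\sum_{i=2}^s\frac{x_i}{x_{i-1}^2}\ge\frac12\min(\tau_2^q,\tau_1^{-p})\big(x_1^{-p}+x_s^q\big).
\end{align*}
Multiplying by $\frac{a}{4(s-1)}$ and combining with the bound on the positive term then yields the claimed inequality with $\alpha=\frac{a}{8(s-1)}\min(\tau_2^q,\tau_1^{-p})$, a constant that depends only on $G$, $H$, $Q$, $\tau_1$ and $\tau_2$ through $a$, $s$, $p$ and $q$.
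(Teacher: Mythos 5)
Your argument is correct, and it reaches the crucial lower bound on $\sum_{i=2}^s\frac{x_i}{x_{i-1}^2}$ by a genuinely different route from the paper. The paper splits this sum into two halves, uses the hypotheses at the outset to replace the extreme variables (one half becomes $\frac{x_2}{\tau_1^2}+\sum_{i=3}^s\frac{x_i}{x_{i-1}^2}$, the other $\sum_{i=2}^{s-1}\frac{x_i}{x_{i-1}^2}+\frac{\tau_2}{x_{s-1}^2}$), and then runs two separate inductions on $s$, at each step minimising explicitly over the newly exposed intermediate variable; this yields the two terms $\alpha_1(s)\,x_s^{1/(2^{s-1}-1)}$ and $\alpha_2(s)\,x_1^{-2^{s-1}/(2^{s-1}-1)}$ directly, together with explicit recursions for the constants. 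You instead apply a single weighted AM--GM inequality with the doubling weights $c_i=2^{s-i}$, which telescope the intermediate $x_j$ exactly and produce the monomial $x_1^{-p}x_s^{q}$ in one stroke, and only afterwards invoke $x_1\le\tau_1$ and $x_s\ge\tau_2$ to convert the product into the sum $x_1^{-p}+x_s^{q}$ at the cost of the factor $\frac12\min(\tau_2^{q},\tau_1^{-p})$. Your version is shorter and makes the origin of the exponents transparent: $p=\frac{2^{s-1}}{2^{s-1}-1}$ and $q=\frac1{2^{s-1}-1}$ are forced by the weight normalisation $W=2^{s-1}-1$ and the cancellation condition $c_j=2c_{j+1}$. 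The paper's inductive minimisation is more laborious but self-contained and tracks the constants step by step. Both arguments use the hypotheses on $\tau_1$ and $\tau_2$ in an essential way, and both deliver a constant $\alpha$ depending only on $G$, $H$, $Q$, $\tau_1$ and $\tau_2$, so your proof can be substituted for the paper's without affecting the proof of Theorem~\ref{thm_main}.
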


\begin{proof}
Lemma~\ref{lem_est_S} implies
\begin{align}\label{aux0}
S(g)\le\frac{bn}{2x_1}-\frac a{8(s-1)}S_1-\frac a{8(s-1)}S_2,
\end{align}
where
\begin{align*}
S_1=\frac{x_2}{\tau_1^2}+\sum_{i=3}^s\frac{x_i}{x_{i-1}^2},\qquad S_2=\sum_{i=2}^{s-1}\frac{x_i}{x_{i-1}^2}+\frac{\tau_2}{x_{s-1}^2}.
\end{align*}
We claim that
\begin{align}\label{aux1}
S_1\ge\alpha_1(s)x_s^{\frac1{2^{s-1}-1}}
\end{align}
for some $\alpha_1(s)>0$. The proof proceeds by induction in $s$. Indeed, it is obvious that estimate~\eqref{aux1} holds when $s=2$. Fix a natural number $m\ge2$ and assume this estimate holds for $s=m$. We will now prove it for $s=m+1$. By the inductive hypothesis,
\begin{align*}
S_1\ge\alpha_1(m)x_m^{\frac1{2^{m-1}-1}}+\frac{x_{m+1}}{x_m^2}
\end{align*}
with $\alpha_1(m)>0$. We treat the expression in the right-hand side as a function of $x_m$. This function attains its minimal value when 
\begin{align*}
\frac\partial{\partial x_m}\bigg(\alpha_1(m)x_m^{\frac1{2^{m-1}-1}}+\frac{x_{m+1}}{x_m^2}\bigg)=\frac{\alpha_1(m)}{2^{m-1}-1}x_m^{-\frac{2^{m-1}-2}{2^{m-1}-1}}-\frac{2x_{m+1}}{x_m^3}=0,
\end{align*}
that is,
\begin{align*}
x_m=\bigg(\frac{2^m-2}{\alpha_1(m)}\bigg)^{\frac{2^{m-1}-1}{2^m-1}}x_{m+1}^{\frac{2^{m-1}-1}{2^m-1}}.
\end{align*}
This minimal value is
\begin{align*}
\alpha_1(m)^{\frac{2^m-2}{2^m-1}}\Big((2^m-2)^{\frac1{2^m-1}}+(2^m-2)^{-\frac{2^m-2}{2^m-1}}\Big)x_{m+1}^{\frac1{2^m-1}}.
\end{align*}
Therefore, the estimate
\begin{align*}
S_1\ge\alpha_1(m+1)x_{m+1}^{\frac1{2^m-1}}
\end{align*}
must hold with 
\begin{align*}
\alpha_1(m+1)=\alpha_1(m)^{\frac{2^m-2}{2^m-1}}\Big((2^m-2)^{\frac1{2^m-1}}+(2^m-2)^{-\frac{2^m-2}{2^m-1}}\Big).\end{align*}
This concludes the proof of~\eqref{aux1}.

Let us demonstrate that
\begin{align}\label{aux2}
S_2\ge\alpha_2(s)x_1^{-\frac{2^{s-1}}{2^{s-1}-1}}
\end{align}
for some $\alpha_2(s)>0$. Again, we proceed by induction. The case $s=2$ is trivial. Given a natural $m\ge2$, assume estimate~\eqref{aux2} holds for $s=m$. We will prove this estimate for $s=m+1$. The inductive hypothesis implies
\begin{align*}
S_2\ge\frac{x_2}{x_1^2}+\alpha_2(m)x_2^{-\frac{2^{m-1}}{2^{m-1}-1}}
\end{align*}
with $\alpha_2(m)>0$. The derivative
\begin{align*}
\frac\partial{\partial x_2}\bigg(\frac{x_2}{x_1^2}+\alpha_2(m)x_2^{-\frac{2^{m-1}}{2^{m-1}-1}}\bigg)=\frac1{x_1^2}-\frac{\alpha_2(m)2^{m-1}}{2^{m-1}-1}x_2^{-\frac{2^m-1}{2^{m-1}-1}}
\end{align*}
is equal to~0 when
\begin{align*}
x_2=\bigg(\frac{2^{m-1}-1}{\alpha_2(m)2^{m-1}}\bigg)^{-\frac{2^{m-1}-1}{2^m-1}}x_1^{\frac{2^m-2}{2^m-1}}.
\end{align*}
This yields
\begin{align*}
S_2\ge\alpha_2(m+1)x_1^{-\frac{2^m}{2^m-1}},
\end{align*}
where 
\begin{align*}
\alpha_2(m+1)=\alpha_2(m)^{\frac{2^{m-1}-1}{2^m-1}}\Bigg(\bigg(\frac{2^{m-1}-1}{2^{m-1}}\bigg)^{-\frac{2^{m-1}-1}{2^m-1}}+\bigg(\frac{2^{m-1}-1}{2^{m-1}}\bigg)^{\frac{2^{m-1}}{2^m-1}}\Bigg).
\end{align*}
Thus, estimate~\eqref{aux2} is proven. The assertion of the lemma immediately follows from~\eqref{aux0}, \eqref{aux1} and~\eqref{aux2}.
\end{proof}

\begin{proof}[Proof of Theorem~\ref{thm_main}.]
We will show that the functional $S|_{\mathcal M_T}$ has a critical point. Lemma~\ref{lem_var} will then imply the existence of $g\in\mathcal M_T$ and $c\in\mathbb R$ satisfying equality~\eqref{PRC}. In the end, we will prove that $c$ must be positive.

Let us fix a $Q$-orthonormal basis in $\mathfrak m$. Given $u,v>0$, suppose $\mathcal M^{u,v}$ is the set of metrics $g\in\mathcal M$ such that
\begin{align*}
u\le g(X,X)\le v
\end{align*}
for all $X\in\mathfrak m$ with $Q(X,X)=1$. It is easy to see that $g\in\mathcal M$ lies in $\mathcal M^{u,v}$ if and only if the eigenvalues of the matrix of $g$ at $H$ in our fixed basis belong to the interval $[u,v]$. This observation implies that $\mathcal M^{u,v}$ is compact. The intersection $\mathcal M^{u,v}\cap\mathcal M_T$ is a closed subset of $\mathcal M^{u,v}$. Consequently, it must be compact as well. We will prove that, when $u$ and $v$ are chosen appropriately, the maximum of the functional $S|_{\mathcal M_T}$ over $\mathcal M^{u,v}\cap\mathcal M_T$ is also its global maximum. This will enable us to conclude $S|_{\mathcal M_T}$ has a critical point.

Suppose $g$ is a metric in $\mathcal M_T$. We write down formula~\eqref{g_form_x} and assume, without loss of generality, that $x_1\le\cdots\le x_s$. Lemma~\ref{lem_est_scal_MT} yields an estimate for $S(g)$. To produce this estimate, consider a $Q$-orthonormal basis $(e_j)_{j=1}^n$ of $\mathfrak m$ adapted to the decomposition~\eqref{m_decomp}. Define the numbers $\tau_1,\tau_2>0$ by setting
\begin{align}\label{postref1}
\tau_1=n\sup T(X,X),\qquad \tau_2=\frac1n\sum_{j=1}^nT(e_j,e_j),
\end{align}
where the supremum is taken over the set of all $X\in\mathfrak m$ with $Q(X,X)=1$. 
It is clear that $x_1$ cannot be greater than $\tau_1$. Indeed, the equality $\tr_gT=1$ implies
\begin{align*}
1=\sum_{j=1}^n\frac{T(e_j,e_j)}{g(e_j,e_j)}\le\frac{\tau_1}n\sum_{i=1}^s\frac{d_i}{x_i}\le\frac{\tau_1}{x_1}.
\end{align*}
Also, $x_s$ cannot be less than $\tau_2$. To see this, fix a natural number $p$ between~1 and $n$ such that 
\begin{align*}
T(e_p,e_p)=\max_{j=1,\ldots,n}T(e_j,e_j).
\end{align*}
It is clear that $T(e_p,e_p)\ge\tau_2$. 
Using the equality $\tr_gT=1$ one more time, we find
\begin{align*}
1=\sum_{j=1}^n\frac{T(e_j,e_j)}{g(e_j,e_j)}\ge\frac{T(e_p,e_p)}{g(e_p,e_p)}\ge\frac{\tau_2}{x_s}.
\end{align*}
Lemma~\ref{lem_est_scal_MT} implies the estimate
\begin{align}\label{postref2}
S(g)\le\frac{bn}2x_1^{-1}-\alpha x_1^{-\frac{2^{s-1}}{2^{s-1}-1}}
\end{align}
and the existence of $\tilde\alpha>0$ depending only on $G$, $H$, $Q$ and $T$ such that
\begin{align*}
S(g)\le\tilde\alpha-\alpha x_s^{\frac1{2^{s-1}-1}}.
\end{align*}
It is clear that $S(g)<0$ if
\begin{align*}
\inf g(X,X)=x_1<\bigg(\frac{2\alpha}{bn}\bigg)^{2^{s-1}-1}
\end{align*}
or
\begin{align*}
\sup g(X,X)=x_s>\bigg(\frac{\tilde\alpha}\alpha\bigg)^{2^{s-1}-1}.
\end{align*}
(The infimum and the supremum here are taken over all $X\in\mathfrak m$ with $Q(X,X)=1$.) Therefore, $S(g)<0$ if $g\in\mathcal M_T$ is outside the set $\mathcal M^{u,v}$ with
\begin{align*}
u=\bigg(\frac{2\alpha}{bn}\bigg)^{2^{s-1}-1},\qquad v=\bigg(\frac{\tilde\alpha}\alpha\bigg)^{2^{s-1}-1}.
\end{align*}
Obviously, the space $M$ carries a $G$-invariant 
metric with nonnegative scalar curvature. Multiplying this metric by a constant if necessary, we may assume that it lies in $\mathcal M_T$. Consequently, the compact set $\mathcal M^{u,v}\cap\mathcal M_T$ is nonempty, and the maximum of $S|_{\mathcal M_T}$ over $\mathcal M^{u,v}\cap\mathcal M_T$ is the maximum of $S|_{\mathcal M_T}$ over $\mathcal M_T$. It becomes clear that $S|_{\mathcal M_T}$ must have a critical point.

Lemma~\ref{lem_var} yields the existence of $g\in\mathcal M_T$ and $c\in\mathbb R$ satisfying~\eqref{PRC}. According to the Bochner theorem (see~\cite[Theorem~1.84]{AB87}), the group $G$ would have to be abelian if $c$ were nonpositive. However, since $s\ge2$, this would contradict Lemma~\ref{lem_gamma}. Thus, the formula $c>0$ must hold.
\end{proof}

\begin{remark}\label{rem_T_pos}
Theorem~\ref{thm_main} requires that the tensor field $T$ be positive-semidefinite. Let us explain the role this hypothesis plays in the proof. If $T$ had mixed signature, we would have been unable to establish the positivity of the number $\tau_2$ defined by~\eqref{postref1}. This would have prevented us from applying Lemma~\ref{lem_est_scal_MT} and obtaining estimate~\eqref{postref2}.
\end{remark}

\begin{remark}
Let $\mathcal M_1$ denote the set of all $g\in\mathcal M$ such that the volume of $M$ with respect to $g$ equals~1. According to~\cite[Theorem~(2.2)]{MWWZ86}, the functional $S|_{\mathcal M_1}$ is bounded from above and proper if and only if $H$ is a maximal connected Lie subgroup of $G$. It is natural to ask whether an analogous result holds for~$S|_{\mathcal M_T}$. The above proof of Theorem~\ref{thm_main} demonstrates that $S|_{\mathcal M_T}$ is bounded from above and proper if $H$ is a maximal connected Lie subgroup of $G$. We will not discuss the converse statement in the present paper.
\end{remark}

\begin{remark}
The conditions of Lemma~\ref{lem_est_scal_MT} are satisfied with $\tau_1=\tau_2=1$ for metrics in $\mathcal M_1$. By repeating the reasoning in Section~\ref{sec_main_result} with minor modifications, one concludes that, when $H$ is a maximal connected Lie subgroup of $G$, the map $S|_{\mathcal M_1}$ is bounded from above and proper. This yields a new proof of one of the implications in~\cite[Theorem~(2.2)]{MWWZ86}.
\end{remark}

\section{What if $H$ is not maximal?}\label{sec_H_not_max}

The purpose of this section is to explore the situation where the maximality hypothesis in Theorem~\ref{thm_main} is violated. We assume $\mathfrak m$ admits a $Q$-orthogonal $\Ad(H)$-invariant decomposition
\begin{align}\label{m2_decom}
\mathfrak m=\mathfrak m_1\oplus\mathfrak m_2
\end{align}
such that $\Ad(H)|_{\mathfrak m_1}$ and $\Ad(H)|_{\mathfrak m_2}$ are irreducible and inequivalent. In this case, the space of $G$-invariant symmetric (0,2)-tensor fields on $M$ is 2-dimensional. Let $T$ be such a tensor field. Assume $T$ is positive-semidefinite but not identically zero. The formula 
\begin{align*}
T(X,Y)=z_1Q(\pr_{\mathfrak m_1}X,\pr_{\mathfrak m_1}Y)+z_2Q(\pr_{\mathfrak m_2}X,\pr_{\mathfrak m_2}Y),\qquad X,Y\in\mathfrak m,
\end{align*}
holds for some $z_1,z_2\ge0$. The numbers $z_1$ and $z_2$ cannot equal~0 simultaneously.

Suppose the group $G$ has a connected proper Lie subgroup $K$ such that $H<K<G$ and $H\ne K$. Denote by $\mathfrak k$ the Lie algebra of $K$. It will be convenient for us to assume $\mathfrak k=\mathfrak h\oplus\mathfrak m_1$. This does not cause any loss of generality.

Formula~\eqref{gamma_def} defines an array of nonnegative constants, $\big(\gamma_{ik}^l\big)_{i,k,l=1}^2$, associated with the scalar product $Q$ and the decomposition~\eqref{m2_decom}. The equality $\gamma_{11}^2=0$ holds true. This equality follows from the inclusion $\mathfrak m_1\subset\mathfrak k$ and the fact that $\mathfrak k$ is orthogonal to $\mathfrak m_2$. We assume $\gamma_{22}^1\ne0$ when stating Proposition~\ref{prop_2sum} below. If $\gamma_{22}^1=0$, then all the metrics in $\mathcal M$ have the same Ricci curvature; see, e.g.,~\cite[Lemma~1.1]{JSPYS97}. Other consequences of this equality are discussed in~\cite[Proof of Theorem~(2.1)]{MWWZ86} and also~\cite[Remark~4.1.2]{MB14}.

Fix a $Q$-orthonormal basis $(w_j)_{j=1}^q$ of the Lie algebra $\mathfrak h$. Given $i=1,2$, the irreducibility of $\Ad(H)|_{\mathfrak m_i}$ implies the existence of a nonnegative constant $\zeta_i$ such that
\begin{align*}
-\Bigg(\sum_{j=1}^q\ad w_j\circ\ad w_j\Bigg)(X)=\zeta_iX
\end{align*}
for all $X\in\mathfrak m_i$. Note that $\zeta_i=0$ if and only if $\Ad(H)|_{\mathfrak m_i}$ is trivial. It is easy to verify that $\zeta_1$ and $\zeta_2$ cannot equal~0 simultaneously. According to~\cite[Lemma~(1.5)]{MWWZ86}, the formula
\begin{align}\label{Casimir}
d_ib_i=2d_i\zeta_i+\sum_{k,l=1}^2\gamma_{ik}^l,\qquad i=1,2,
\end{align}
holds with $b_i$ given by~\eqref{b_def} and $d_i$ the dimension of $\mathfrak m_i$.

\begin{proposition}\label{prop_2sum}
Assume $\gamma_{22}^1\ne0$. The following statements are equivalent:
\begin{enumerate}
\item
There exist a metric $g\in\mathcal M$ and a number $c>0$ such that the Ricci curvature of $g$ coincides with $cT$.
\item
The inequality
\begin{align}\label{2sum_cond}
\bigg(\zeta_2+\frac{\gamma_{22}^2}{4d_2}+\frac{\gamma_{22}^1}{d_2}\bigg)z_1>\bigg(\zeta_1+\frac{\gamma_{11}^1}{4d_1}\bigg)z_2
\end{align}
is satisfied.
\end{enumerate}
When these statements hold, the pair $(g,c)\in\mathcal M\times(0,\infty)$ is unique up to scaling of $g$.
\end{proposition}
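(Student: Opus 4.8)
The plan is to reduce everything to a single scalar equation in the ratio of the two metric parameters and to analyse it via elementary monotonicity. First I would write an arbitrary $g\in\mathcal M$ as $g=x_1Q|_{\mathfrak m_1}+x_2Q|_{\mathfrak m_2}$ with $x_1,x_2>0$, exactly as in~\eqref{g_form_x} specialised to $s=2$. Since $\Ad(H)|_{\mathfrak m_1}$ and $\Ad(H)|_{\mathfrak m_2}$ are irreducible and inequivalent, Schur's lemma forces every $G$-invariant symmetric $(0,2)$-tensor, and in particular $\Ric(g)$, to be diagonal in this decomposition; thus $\Ric(g)=r_1Q|_{\mathfrak m_1}+r_2Q|_{\mathfrak m_2}$. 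The equation $\Ric(g)=cT$ then becomes the pair $r_1=cz_1$, $r_2=cz_2$. Because the Ricci tensor is invariant under scaling of $g$, the functions $r_1,r_2$ depend only on the ratio $t=x_1/x_2$, so I would work throughout with the single variable $t>0$ and recover $g$ up to scaling at the end.

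Next I would compute $r_1$ and $r_2$ explicitly. Rather than invoking a general homogeneous Ricci formula, I would exploit the identity $dS_g(h)=-\langle\Ric(g),h\rangle$ established in the proof of Lemma~\ref{lem_var}: computing the $g$-inner product in a $Q$-orthonormal basis gives $\langle\Ric(g),h\rangle=d_1x_1^{-2}r_1h_1+d_2x_2^{-2}r_2h_2$ for $h=h_1Q|_{\mathfrak m_1}+h_2Q|_{\mathfrak m_2}$, whence $r_i=-\tfrac{x_i^2}{d_i}\,\partial S/\partial x_i$. Substituting the scalar curvature~\eqref{sc_curv_formula} for $s=2$, using the vanishing $\gamma_{11}^2=\gamma_{12}^1=0$ together with the full symmetry of $\gamma_{ik}^l$, and eliminating $b_1,b_2$ through the Casimir relation~\eqref{Casimir}, I expect the clean formulas
\begin{align*}
r_1=\zeta_1+\frac{\gamma_{11}^1}{4d_1}+\frac{\gamma_{22}^1}{4d_1}\,t^2,\qquad
r_2=\zeta_2+\frac{\gamma_{22}^2}{4d_2}+\frac{\gamma_{22}^1}{d_2}-\frac{\gamma_{22}^1}{2d_2}\,t.
\end{align*}
The crucial structural features are that $r_1(t)>0$ for every $t>0$ (because $\gamma_{22}^1\ne0$), while $r_2$ is an affine, strictly decreasing function of $t$.

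With these formulas in hand, I would set $F(t)=z_2r_1(t)-z_1r_2(t)$ and show that the existence of $(g,c)$ with $c>0$ is equivalent to $F$ having a positive root. The key point unifying all sign cases of $(z_1,z_2)$ is positivity of $r_1$: any solution forces $cz_1=r_1>0$, hence $z_1>0$, and then $c=r_1(t)/z_1>0$ is automatic, so the only remaining condition is $z_1r_2=z_2r_1$, i.e.\ $F(t)=0$. Now $F$ is a quadratic with positive leading and linear coefficients, so it is strictly increasing on $[0,\infty)$ and tends to $+\infty$; therefore it has a (necessarily unique) positive root if and only if $F(0)<0$. A direct check identifies $F(0)<0$ with inequality~\eqref{2sum_cond}, and since the coefficient of $z_1$ there is strictly positive (again using $\gamma_{22}^1\ne0$) while that of $z_2$ is nonnegative, $F(0)<0$ by itself already forces $z_1>0$. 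This gives both implications at once and, via strict monotonicity of $F$, the uniqueness of $t$, hence of $g$ up to scaling and of the resulting $c$.

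The main obstacle I anticipate is the Ricci computation in the second step: correctly bookkeeping the symmetric array $\gamma_{ik}^l$ (which entries survive, and the factors of two coming from permutations such as $\gamma_{22}^1=\gamma_{12}^2=\gamma_{21}^2$) and then simplifying against the Casimir identity~\eqref{Casimir} so that the $b_i$ disappear and the stated closed forms emerge. Once $r_1,r_2$ are in hand, the positivity $r_1>0$ and the strict monotonicity of $F$ do all the remaining work, so the chief risk is an arithmetic slip in assembling $F(0)$ and matching it with~\eqref{2sum_cond}; I would verify this matching, and the degenerate cases $z_1=0$ and $z_2=0$, as explicit consistency checks.
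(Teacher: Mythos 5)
Your proposal is correct; I verified your closed forms for the two Ricci components, $r_1=\zeta_1+\frac{\gamma_{11}^1}{4d_1}+\frac{\gamma_{22}^1}{4d_1}t^2$ and $r_2=\zeta_2+\frac{\gamma_{22}^2}{4d_2}+\frac{\gamma_{22}^1}{d_2}-\frac{\gamma_{22}^1}{2d_2}t$, against the equations the paper quotes from \cite{JSPYS97} after substituting the Casimir identity~\eqref{Casimir}, and they agree; your $F(0)<0$ is exactly inequality~\eqref{2sum_cond}. The setup is the same as the paper's (same decomposition, same two scalar equations, same use of~\eqref{Casimir}), but your endgame differs: the paper eliminates the ratio $t=x_1/x_2$ to obtain a quadratic in $c$, computes its discriminant $D$, and determines the admissible sign of $\pm\sqrt D$ from the positivity of $x_1/x_2$; you instead eliminate $c$ using the positivity of $r_1$ and study the single function $F(t)=z_2r_1(t)-z_1r_2(t)$, which for $z_1>0$ is strictly increasing on $[0,\infty)$ with $F(t)\to+\infty$. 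Your route buys a more transparent treatment of existence and uniqueness (one monotone function, one root) and absorbs the degenerate case $z_2=0$ without a separate argument, whereas the paper must handle $z_2=0$ separately and argue about which branch of the quadratic formula is admissible; the paper's route has the advantage of producing explicit formulas~\eqref{c_plus} and~\eqref{ratio_def} for $c$ and $x_1/x_2$. Two small points to make explicit when writing this up: the observation that $r_1>0$ uses $\zeta_1,\gamma_{11}^1\ge0$ together with $\gamma_{22}^1>0$ and $t>0$, and the strict monotonicity of $F$ on $[0,\infty)$ relies on the linear coefficient $\frac{z_1\gamma_{22}^1}{2d_2}$ being strictly positive, which is available precisely because you have already reduced to the case $z_1>0$.
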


\begin{proof}
Given a metric $g\in\mathcal M$, it is easy to see that
\begin{align*}
g(X,Y)=x_1Q(\pr_{\mathfrak m_1}X,\pr_{\mathfrak m_1}Y)+x_2Q(\pr_{\mathfrak m_2}X,\pr_{\mathfrak m_2}Y),\qquad X,Y\in\mathfrak m,
\end{align*}
for some $x_1,x_2>0$. The Ricci curvature of $g$ coincides with $cT$ if and only if
\begin{align*}
\frac{b_1}2-\frac{\gamma_{11}^1}{4d_1}-\frac{\gamma_{22}^1}{2d_1}+\frac{\gamma_{22}^1}{4d_1}\frac{x_1^2}{x_2^2}&=cz_1,
\\
\frac{b_2}2-\frac{\gamma_{22}^2}{4d_2}-\frac{\gamma_{22}^1}{2d_2}\frac{x_1}{x_2}&=cz_2;
\end{align*}
see, e.g.,~\cite[Lemma~1.1]{JSPYS97}. Using~\eqref{Casimir}, we rewrite these equalities as
\begin{align}\label{Pr_trans}
\zeta_1+\frac{\gamma_{11}^1}{4d_1}+\frac{d_2^2}{d_1\gamma_{22}^1}\left(\zeta_2+\frac{\gamma_{22}^2}{4d_2}+\frac{\gamma_{22}^1}{d_2}-cz_2\right)^2-cz_1&=0, \notag \\
\frac{2d_2}{\gamma_{22}^1}\left(\zeta_2+\frac{\gamma_{22}^2}{4d_2}+\frac{\gamma_{22}^1}{d_2}-cz_2\right)&=\frac{x_1}{x_2}.
\end{align}
Our objective is to show that~\eqref{2sum_cond} is a necessary and sufficient condition for the existence of $x_1,x_2,c>0$ satisfying~\eqref{Pr_trans}. This will prove the first assertion of the proposition. It will be clear from our arguments that the ratio $\frac{x_1}{x_2}$ and the number $c$ are uniquely determined by~\eqref{Pr_trans}. This fact implies the second assertion.

Suppose $z_2=0$. Then $x_1,x_2,c>0$ satisfying~\eqref{Pr_trans} obviously exist. Inequality~\eqref{2sum_cond} inevitably holds. The second line in~\eqref{Pr_trans} yields a unique value for $\frac{x_1}{x_2}$, and the first line determines~$c$. Thus, the proposition is proven. In what follows, assume $z_2\ne0$.

Let us show that the existence of $x_1,x_2,c>0$ satisfying~\eqref{Pr_trans} implies~\eqref{2sum_cond}. Transforming the first line in~\eqref{Pr_trans}, we find
\begin{align*}
\frac{d_2^2}{d_1\gamma_{22}^1}z_2^2c^2-\left(\eta_2z_2+z_1\right)c+\frac{d_1\gamma_{22}^1}{4d_2^2}(\eta_2^2+2\eta_1)=0,
\end{align*}
where
\begin{align*}
\eta_1=\frac{2d_2^2}{d_1\gamma_{22}^1}\left(\zeta_1+\frac{\gamma_{11}^1}{4d_1}\right),
\qquad
\eta_2=\frac{2d_2^2}{d_1\gamma_{22}^1}\left(\zeta_2+\frac{\gamma_{22}^2}{4d_2}+\frac{\gamma_{22}^1}{d_2}\right).
\end{align*}
This is a quadratic equation in $c$ with discriminant
\begin{align*}
D=(\eta_2z_2&+z_1)^2-(\eta_2^2+2\eta_1)z_2^2.
\end{align*}
Its solutions are given by the formula
\begin{align}\label{c_plus}
c=\frac{d_1\gamma_{22}^1\big(\eta_2z_2+z_1\pm\sqrt D\big)}{2d_2^2z_2^2}.
\end{align}
Substituting this into the second line in~\eqref{Pr_trans} yields
\begin{align}\label{ratio_def}
\frac{x_1}{x_2}&=-\frac{d_1}{d_2z_2}\big(z_1\pm\sqrt D\big).
\end{align}
Because the ratio $\frac{x_1}{x_2}$ is positive, the expression in the right-hand side must be positive. As a consequence, we obtain
\begin{align}\label{cond_aux1}
\eta_1z_2<\eta_2z_1,
\end{align}
which is equivalent to~\eqref{2sum_cond}. 

The above arguments demonstrate that, when $x_1,x_2,c>0$ satisfying~\eqref{Pr_trans} exist, the ratio $\frac{x_1}{x_2}$ and the number $c$ are given by~\eqref{ratio_def} and~\eqref{c_plus}. In both formulas, the sign before the square root must be a minus. Thus, $\frac{x_1}{x_2}$ and $c$ are determined uniquely.

Let us now assume that~\eqref{2sum_cond} holds. We will produce $x_1,x_2,c>0$ satisfying~\eqref{Pr_trans}. This will complete the proof of the proposition. Observe that the discriminant $D$ is inevitably positive. Indeed, formula~\eqref{cond_aux1}, which is equivalent to~\eqref{2sum_cond}, yields
\begin{align}\label{est_D_last}
D=z_1^2+2\eta_2z_1z_2-2\eta_1z_2^2>z_1^2\ge0.
\end{align}
We define $c$ by~\eqref{c_plus} with a minus in front of the square root. It is easy to check that $c$ is positive. The first equality in~\eqref{Pr_trans} holds true. Next, we define
\begin{align*}
x_1=-\frac{d_1}{d_2z_2}\big(z_1-\sqrt D\big),\qquad x_2=1.
\end{align*}
The positivity of $x_1$ follows from~\eqref{est_D_last}. An elementary computation shows that the second equality in~\eqref{Pr_trans} holds true.
\end{proof}

\begin{remark}\label{rem_2sum_var}
Rather than arguing as above, one may prove Proposition~\ref{prop_2sum} by exploiting Lemma~\ref{lem_var} and examining the functional $S|_{\mathcal M_T}$. Note that this functional is bounded from above unless $z_1=0$ and $\zeta_1+\gamma_{11}^1>0$.
\end{remark}

\section*{Acknowledgements}

I express my gratitude to Yanir Rubinstein and Godfrey Smith for the stimulating discussions on metrics with prescribed Ricci curvature. I am also thankful to Wolfgang Ziller for suggesting several useful references to me.

\end{document}